\newcommand{\define}{\textbf}
\renewcommand{\phi}{\varphi}
\renewcommand{\tilde}{\widetilde}
\newcommand{\Q}{\mathbb{Q}}
\newcommand{\R}{\mathbb{R}}
\newcommand{\Z}{\mathbb{Z}}
\renewcommand{\P}{\mathbb{P}}
\newcommand{\E}{\mathbb{E}}
\DeclareMathOperator{\Hom}{Hom}
\DeclareMathOperator{\Vol}{Vol}
\DeclareMathOperator{\conv}{conv}
\DeclareMathOperator{\lcm}{lcm}
\DeclareMathOperator{\sqf}{sqf}
\newtheorem{theorem}{Theorem}
\newtheorem{lemma}[theorem]{Lemma}
\newtheorem{corollary}[theorem]{Corollary}
\theoremstyle{definition}
\newtheorem{remark}[theorem]{Remark}
\newtheorem{example}[theorem]{Example}
\newcommand{\excise}[1]{}
\begin{document}

\title[]{
Counting lattice points in free sums of polytopes}

\author{Alan Stapledon}
\email{astapldn@gmail.com}

\keywords{polytopes, Ehrhart theory}
\date{}
\thanks{This work was initiated and partially completed while  the author was supported by a University of Sydney Postdoctoral Fellowship under the
supervision of A/Prof Anthony Henderson. The author would like to thank Anthony Henderson for some useful discussions.}

\begin{abstract}
We show how to compute the Ehrhart polynomial of the free sum of two lattice polytopes containing the origin $P$ and $Q$  in terms of the enumerative combinatorics of $P$ and $Q$. This generalizes work of Beck, Jayawant, McAllister, and Braun, and follows from the observation that the weighted $h^*$-polynomial is multiplicative with respect to the free sum. We deduce that given a lattice polytope $P$ containing the origin,  the problem of computing the number of lattice points in all rational dilates of $P$ is equivalent to the problem of computing the number of lattice points in all integer dilates of all free sums of $P$ with itself.
\end{abstract}

\maketitle


Let $P$ and $Q$ be full-dimensional
lattice polytopes containing the origin with respect to lattices $N_P \cong \Z^{\dim P}$ and $N_Q \cong \Z^{\dim Q}$ respectively.
The \define{free sum} (also known as `direct sum') $P \oplus Q$ is a full-dimensional lattice polytope
containing the origin in the lattice $N_P \oplus N_Q$, defined by:
\[
P \oplus Q = \conv( (P \times 0_Q) \cup (0_P \times Q) ) \subseteq (N_P \oplus N_Q)_\R,
\]
where $\conv(S)$ denotes the convex hull of a set $S$, $N_\R := N \otimes_\R \R$ for a lattice $N$, and $0_P,0_Q$ denote the origin in $N_P,N_Q$ respectively.

The \define{Ehrhart polynomial} $f(P;m)$ of $P$ is a polynomial of degree $\dim P$ characterized by the property that
$f(P;m) = \# (mP \cap N_P)$ for all $m \in \Z_{\ge 0}$ \cite{ehrhartpolynomial}. Our goal is to describe
the Ehrhart polynomial of $P \oplus Q$
in terms of the
enumerative combinatorics of $P$ and $Q$.

We first observe that $\{ \#(\lambda P \cap N_P)  \mid \lambda \in \Q_{\ge 0} \}$ and $\{ \#(\lambda Q \cap N_Q)  \mid \lambda \in \Q_{\ge 0} \}$
determine $\{ \#(\lambda (P \oplus Q) \cap (N_P \oplus N_Q))  \mid \lambda \in \Q_{\ge 0} \}$, and hence the set $\{ \#(m (P \oplus Q) \cap (N_P \oplus N_Q))  \mid m \in \Z_{\ge 0} \}$, which is encoded by the Ehrhart polynomial of $P \oplus Q$
(see \eqref{e:compare} for a partial converse). Indeed, this follows from the following observation: if $\partial_{\ne 0} P$ denotes the union of the facets of $P$ not containing the origin, then, by
definition, for any $\lambda \in \Q_{\ge 0}$:
\[
\#(\partial_{\ne 0}( \lambda P ) \cap N_P ) = \#(\lambda P \cap N_P) - \max_{0 \le \lambda' < \lambda} \#(\lambda' P \cap N_P),
\]
and
\begin{equation}\label{e:boundary}
\partial_{\ne 0}( \lambda(P \oplus Q)) = \bigcup_{\substack{\lambda_P,\lambda_Q \ge  0 \\ \lambda_P + \lambda_Q = \lambda}}
\partial_{\ne 0}( \lambda_P P ) \times \partial_{\ne 0}( \lambda_Q Q),
\end{equation}
where the right hand side is a disjoint union. 

It will be useful to express the invariants above in terms of corresponding generating series. Firstly, the Ehrhart polynomial may be encoded as follows:
\begin{equation}\label{e:Ehrhart}
\sum_{m \ge 0} f(P;m) t^m = \frac{h^*(P;t)}{(1 - t)^{\dim P + 1}},
\end{equation}
where $h^*(P;t) \in \Z[t]$ is a polynomial of degree at most $\dim P$ with non-negative integer coefficients, called the \define{$h^*$-polynomial} of $P$ \cite{StaDecompositions}. Secondly, let $M_P := \Hom(N_P,\Z)$ be the dual lattice, and recall that
the dual polyhedron $P^{\vee}$ is defined to be
$P^{\vee} = \{ u \in (M_P)_\R \mid \langle u,v \rangle \ge -1 \textrm{ for all } v \in P \}$.
Let
\begin{equation}\label{e:Gorenstein}
r_P := \min \{ r \in \Z_{> 0 }  \mid rP^{\vee} \textrm{ is a lattice polyhedron } \}.
\end{equation}
Note that since $(P \oplus Q)^{\vee}$ is the Cartesian product  $P^{\vee} \times Q^{\vee}$, we have $r_{P \oplus Q} = \lcm(r_P,r_Q)$.
Then
one may associate a generating series encoding $\{ \#(\lambda P \cap N_P)  \mid \lambda \in \Q_{\ge 0} \}$:
\begin{equation}\label{e:altdef}
\sum_{\lambda \in \Q_{\ge 0}} \#(\partial_{\ne 0}( \lambda P ) \cap N_P ) t^{\lambda} = \frac{\tilde{h}(P;t)}{(1 - t)^{\dim P}},
\end{equation}
where $\tilde{h}(P;t) \in \Z[t^{\frac{1}{r_P}}]$ is a polynomial of degree at most $\dim P$ with fractional exponents and non-negative integer coefficients, called the \define{weighted $h^*$-polynomial} of $P$.

\begin{example}
Let $N_P = \Z$ and let $P = [-2,2]$, $Q = [-1,3] = P + 1$. Then $r_P = 2, r_Q = 3$, and
one may compute:
\begin{align*}
\tilde{h}(P;t) &= 1 + 2t^{1/2} + t, \\
\tilde{h}(Q;t) &= 1 + t^{1/3} + t^{2/3} +  t, \\
h^*(P;t) &= h^*(Q;t) = 1 + 3t, \\
\tilde{h}(P \oplus P;t) &= \tilde{h}(P;t)\tilde{h}(P;t)
=  1 + 4t^{1/2} + 6t + 4t^{1/2} + t^{2}, \\
h^*(P \oplus P;t) &= 1 + 10t + 5t^2, \\
\tilde{h}(P \oplus Q;t) &= \tilde{h}(P;t)\tilde{h}(Q;t) \\
&= 1 + t^{1/3} + 2t^{1/2} + t^{2/3}  + 2t^{5/6} +  2t \\
&\hspace{0.5cm} + 2t^{7/6} + t^{4/3} + 2t^{3/2} + t^{5/3}  + t^{2},\\
h^*(P \oplus Q;t) &= 1 + 8t + 7t^2.
\end{align*}

\end{example}

\begin{remark}\label{r:review}
The weighted $h^*$-polynomial was introduced in \cite{Weighted} and generalized in \cite[Section~4.3]{Monodromy}. For the specific definition given in \eqref{e:altdef}, see
  the proof of Proposition 2.6 in \cite{Weighted} with $\lambda \equiv 0$ and $s = t$, and note that, roughly speaking, using the denominator $(1- t)^{\dim P}$ in \eqref{e:altdef} rather than the denominator $(1- t)^{\dim P + 1}$ (cf. \eqref{e:Ehrhart}) corresponds to enumerating lattice points on the boundary of the polytope, rather than those in the polytope itself.  For the non-negativity of the coefficients together with a formula to compute $\tilde{h}(P;t)$, see (15) in \cite{Weighted}.  For the fact that $\tilde{h}(P;t) \in \Z[t^{\frac{1}{r_P}}]$, see Remark~\ref{r:rp} below. Note that it follows from \eqref{e:altdef} that if we write $\tilde{h}(P;t) = \sum_{j \in \Q} \tilde{h}_{P,j} t^j$, then the polynomial $\sum_{i \in \Z} \tilde{h}_{P,i} t^i$ consisting of the terms with  integer-valued exponents of $t$ is precisely the $h^*$-polynomial associated to the lattice polyhedral complex determined by the union of the facets of $P$ not containing the origin.
\end{remark}

Moreover, let \begin{equation}\label{e:Psi} \Psi: \bigcup_{r \in \Z_{> 0}} \R[t^{1/r}] \rightarrow \R[t] \end{equation} denote the $\R$-linear map defined by $\Psi(t^j) = t^{\lceil j \rceil}$ for all $j \in \Q_{\ge 0}$. Then we recover the $h^*$-polynomial of $P$ via the formula (see (14) in \cite{Weighted}):
\begin{equation}\label{e:roundup}
 h^*(P;t) = \Psi(\tilde{h}(P;t)).
 \end{equation}
We also note that when the origin lies in the relative interior of $P$, we have the symmetry \cite[Corollary~2.12]{Weighted}:
\begin{equation}\label{e:symmetry}
\tilde{h}(P;t) = t^{\dim P} \tilde{h}(P;t^{-1}).
\end{equation}

\begin{remark}
The following alternative definition of the weighted $h^*$-polynomial and surrounding discussion is due to Benjamin Nill. Let $Q \subset \R^n$ be a full-dimensional rational polytope and let $k$ be the smallest positive integer such 
that $kQ$ is a lattice polytope. Consider the function defined by $f(Q;m) = \# (mQ \cap \R^n)$ for all $m \in \Z_{\ge 0}$. In \cite{StaDecompositions}, Stanley proved that the associated generating series has the form:
\[
\sum_{m \ge 0}  f(Q;m) t^m = \frac{h^*(Q;t)}{(1 - t^k)^{n + 1}}, 
\]
where $h^*(Q;t)$ is a polynomial of degree at most $k(n + 1) - 1$ with non-negative integer coefficients, called the associated $h^*$-polynomial (see \eqref{e:Ehrhart} for the case when $k = 1$).
If $P$ is a lattice polytope containing the origin, then, with the notation above, one can verify from the definitions that
\[
\tilde{h}(P;t) = h^*(\frac{1}{r_P}P; t^{\frac{1}{r_P}}).
\]
In particular, it follows that the symmetry \eqref{e:symmetry} is equivalent to the statement that if $Q$ is a rational polytope containing the origin in its relative interior such that the associated dual polytope is a lattice polytope, then the coefficients of $h^*(Q;t)$ are symmetric. The latter fact was proved independently by Fiset and Kasprzyk \cite{FKPalindromic}.
\end{remark}

Then \eqref{e:boundary} immediately implies the following multiplicative formula.

\begin{lemma}\label{l:main}
Let $P,Q$ be full-dimensional lattice polytopes containing the origin with respect to lattices $N_P,N_Q$ respectively. Then
\[
\tilde{h}(P \oplus Q;t) = \tilde{h}(P;t)\tilde{h}(Q;t).
\]
\end{lemma}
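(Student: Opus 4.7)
The plan is to convert the geometric identity \eqref{e:boundary} directly into an identity of generating series, and then match it against the defining equation \eqref{e:altdef} for the weighted $h^*$-polynomial.

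First, I would take cardinalities on both sides of \eqref{e:boundary}. Since the union on the right is disjoint and $(N_P \oplus N_Q)$-lattice points of a product $A \times B \subseteq (N_P)_\R \oplus (N_Q)_\R$ correspond bijectively to pairs consisting of an $N_P$-lattice point of $A$ and an $N_Q$-lattice point of $B$, for each $\lambda \in \Q_{\ge 0}$ I obtain
\[
\#(\partial_{\ne 0}(\lambda(P \oplus Q)) \cap (N_P \oplus N_Q)) = \sum_{\substack{\lambda_P,\lambda_Q \ge 0 \\ \lambda_P + \lambda_Q = \lambda}} \#(\partial_{\ne 0}(\lambda_P P) \cap N_P) \cdot \#(\partial_{\ne 0}(\lambda_Q Q) \cap N_Q).
\]
This is a convolution formula for coefficients.

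Next, I would multiply both sides by $t^\lambda$ and sum over $\lambda \in \Q_{\ge 0}$. For each fixed $\lambda$ only finitely many pairs $(\lambda_P,\lambda_Q)$ contribute nontrivially (indeed only $\lambda_P \in \frac{1}{r_P}\Z_{\ge 0}$ and $\lambda_Q \in \frac{1}{r_Q}\Z_{\ge 0}$ give nonzero terms), so the convolution on the right is precisely the Cauchy product of the two generating series for $P$ and for $Q$. Applying \eqref{e:altdef} to each of the three polytopes $P$, $Q$, $P \oplus Q$ then yields
\[
\frac{\tilde{h}(P \oplus Q;t)}{(1 - t)^{\dim(P \oplus Q)}} = \frac{\tilde{h}(P;t)}{(1 - t)^{\dim P}} \cdot \frac{\tilde{h}(Q;t)}{(1 - t)^{\dim Q}}.
\]

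To conclude, I would observe that $\dim(P \oplus Q) = \dim P + \dim Q$, so the denominators on the two sides agree, and cancellation produces the desired identity. There is essentially no obstacle here: the work has already been done in establishing \eqref{e:boundary} and in the choice of denominator $(1-t)^{\dim P}$ in \eqref{e:altdef}. The only minor point to verify carefully is the legitimacy of manipulating the generating series with fractional exponents as a Cauchy product, which is justified by the local finiteness of the support just noted.
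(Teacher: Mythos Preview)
Your proposal is correct and is exactly the argument the paper has in mind: the paper simply states that \eqref{e:boundary} ``immediately implies'' the lemma, and what you have written is precisely the routine unpacking of that implication via the Cauchy product of the generating series in \eqref{e:altdef}. There is nothing to add or correct.
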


Combined with \eqref{e:roundup}, we deduce the following formula for the Ehrhart polynomial of $P \oplus Q$:
\begin{equation}\label{e:compute}
h^*(P \oplus Q;t) = \Psi(\tilde{h}(P;t)\tilde{h}(Q;t)).
\end{equation}

\begin{remark}\label{r:exponents}
Let $\Theta: \bigcup_{r \in \Z_{> 0}} \R[t^{1/r}] \rightarrow \R[t]$ denote the $\R$-linear map defined by $\Theta(t^j) = t^{j - \lfloor j \rfloor}$ for all $j \in \Q_{\ge 0}$. Then \cite[Example~4.12]{Monodromy} gives an explicit formula for $\Theta(\tilde{h}(P;t))$ that we will describe below.

Each facet $F$ of $P$ not containing the origin has the form
\[
F = P \cap \{ v \in (N_P)_\R \mid \langle u_F,v \rangle = -m_F \},
\]
where $u_F \in M_P$ is a primitive integer vector, and $m_F \in \Z_{>0}$ is the  \define{lattice distance} of $F$ from the origin.
Then the vertices of $P^{\vee}$ are precisely $\{ \frac{u_F}{m_F} \mid F \textrm{ facet of } P, 0 \notin F \}$, and hence
$r_P = \lcm(m_F \mid  F \textrm { facet of } P, 0 \notin F)$. Then
\[
\Theta(\tilde{h}(P;t)) = \sum_{ \substack{ F \textrm{ facet of } P \\ 0 \notin F } } \Vol(F)  \sum_{i = 0}^{m_F - 1} t^{\frac{i}{m_F}},
\]
where $\Vol(F)$ is defined in Remark~\ref{r:normvol} below.
\end{remark}

\begin{remark}\label{r:normvol}
For any lattice polytope $F$, $h^*(F;1)$ is equal to the \define{normalized volume} $\Vol(F)$ of $F$, i.e. after possibly replacing
the underlying lattice with a smaller lattice, we may assume that $F \subseteq N_\R \cong \R^{\dim F}$ for some lattice $N$, and then
$\Vol(F)$ is $(\dim F)!$ times the Euclidean volume of $F$. In the formula in Remark~\ref{r:exponents}, to make the connection with \cite[Example~4.12]{Monodromy} explicit, observe
that $\Vol(F') = m_F \Vol(F)$, where $F'$ is the convex hull of $F$ and the origin.
\end{remark}

\begin{remark}\label{r:rp}
Remark~\ref{r:exponents} shows that
$r_P$ is the minimal choice of denominator in the fractional exponents in $\tilde{h}(P;t)$ in the sense that $\tilde{h}(P;t) \in \Z[t^{1/r_P}]$ and if $\tilde{h}(P;t) \in \Z[t^{1/r}]$, then $r_P$ divides $r$. 
For example, $\tilde{h}(P;t) = h^*(P;t)$ if and only if $r_P = 1$.
\end{remark}

\begin{remark}
If $P$ and $Q$ contain the origin, but are not full-dimensional, then one may apply the results above after replacing $N_P$ and $N_Q$ by their intersections with the linear spans of $P$ and $Q$ respectively. If $P$ contains the origin but not $Q$, then one may replace $Q$ with $Q' = \conv(Q,0_{Q})$ since $P \oplus Q = P \oplus Q'$.

If neither $P$ nor $Q$ contain the origin, but satisfy the property that the affine spans of $P$ and $Q$ are strict subsets of the linear spans of $P$ and $Q$ respectively, then $P$, $Q$ and $P \oplus Q$ are the unique facets not containing the origin of $P' = \conv(P,0_{P})$, $Q' = \conv(Q,0_{Q})$ and $P' \oplus Q'$ respectively. In this case, by Remark~\ref{r:review} and Lemma~\ref{l:main}, $h^*(P \oplus Q; t)$ is the polynomial
consisting of the terms of $\tilde{h}(P' \oplus Q'; t) = \tilde{h}(P';t)\tilde{h}(Q';t)$ with  integer-valued exponents of $t$.

\end{remark}

We deduce a new proof to the following result of Beck, Jayawant and McAllister \cite[Theorem~1.3]{BJMLattice}, which itself generalizes a result of Braun \cite{BraEhrhart}.

\begin{corollary}
Let $P,Q$ be full-dimensional lattice polytopes containing the origin with respect to lattices $N_P,N_Q$ respectively. Then
\[
h^*(P \oplus Q;t) = h^*(P;t)h^*(Q;t) \iff r_P = 1 \textrm{ or } r_Q = 1.
\]
\end{corollary}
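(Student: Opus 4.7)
By Lemma~\ref{l:main} and \eqref{e:roundup}, the equality $h^*(P \oplus Q;t) = h^*(P;t)h^*(Q;t)$ is equivalent to
\[
\Psi(\tilde{h}(P;t)\tilde{h}(Q;t)) = \Psi(\tilde{h}(P;t))\Psi(\tilde{h}(Q;t)),
\]
so the corollary amounts to determining when $\Psi$ is multiplicative on these specific inputs. The key pointwise fact is that $0 \le \lceil a \rceil + \lceil b \rceil - \lceil a + b \rceil \le 1$ for all $a,b \in \Q_{\ge 0}$, with the difference equal to $1$ exactly when the fractional parts $\{a\},\{b\}$ are both positive and satisfy $\{a\} + \{b\} \le 1$.

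\textbf{Easy direction.} If $r_P = 1$, then by Remark~\ref{r:rp}, $\tilde{h}(P;t) = h^*(P;t) \in \Z[t]$. For $i \in \Z_{\ge 0}$ and $j \in \Q_{\ge 0}$, one has $\Psi(t^i \cdot t^j) = t^{\lceil i+j \rceil} = t^{i+\lceil j\rceil} = t^i \Psi(t^j)$, so by $\R$-linearity, $\Psi(\tilde{h}(P;t)\tilde{h}(Q;t)) = \tilde{h}(P;t)\,\Psi(\tilde{h}(Q;t)) = h^*(P;t) h^*(Q;t)$. The case $r_Q = 1$ is symmetric.

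\textbf{Hard direction (contrapositive).} Assume $r_P \ge 2$ and $r_Q \ge 2$. Write $\tilde{h}(P;t) = A_P(t) + B_P(t)$, where $A_P$ collects terms with integer exponents and $B_P$ the strictly fractional ones; decompose $\tilde{h}(Q;t)$ similarly. Using the commutation identity from the easy direction, the cross-terms $A_P A_Q$, $A_P B_Q$ and $B_P A_Q$ contribute identically to both sides, reducing the question to
\[
\Psi(B_P(t))\Psi(B_Q(t)) - \Psi(B_P(t) B_Q(t)) = \sum_{a,b} c_a d_b\bigl(t^{\lceil a\rceil + \lceil b\rceil} - t^{\lceil a+b\rceil}\bigr),
\]
where the sum is over exponents $a$ in $B_P$ and $b$ in $B_Q$ with positive coefficients $c_a,d_b$. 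By the observation in the setup, each summand is either $0$ (when $\{a\} + \{b\} > 1$) or of the form $c_a d_b (t^{n+1} - t^n)$ with $n = \lceil a+b\rceil$ (when $\{a\}+\{b\} \le 1$).

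\textbf{Producing a defective pair and concluding.} By Remark~\ref{r:exponents} together with $r_P \ge 2$, some facet $F$ of $P$ not containing the origin has lattice distance $m_F \ge 2$; the $i = 1$ summand in the formula for $\Theta(\tilde{h}(P;t))$ contributes $\Vol(F) > 0$ to the coefficient of $t^{1/m_F}$, so there exists $a$ in $B_P$ with positive coefficient and $\{a\} = 1/m_F \le 1/2$. Similarly find $b$ in $B_Q$ with $\{b\} \le 1/2$. Then $\{a\} + \{b\} \le 1$, so at least one summand above is nonzero. To conclude that the whole sum does not vanish as a polynomial, set $n_0 := \min\{\lceil a+b \rceil : \{a\}+\{b\} \le 1,\; c_a d_b > 0\}$; the coefficient of $t^{n_0}$ is strictly negative, since any positive contribution to $t^{n_0}$ would have to come from a pair with $\lceil a+b\rceil = n_0 - 1$, contradicting the minimality of $n_0$. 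I expect this minimality step to be the only real obstacle: the pointwise defect from a single pair could in principle be cancelled by other pairs, and the minimality trick is what rules this out.
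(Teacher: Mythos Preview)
Your proof is correct and follows essentially the same route as the paper's: both reduce via Lemma~\ref{l:main} and \eqref{e:roundup} to comparing $\sum_{j,j'} \tilde{h}_{P,j}\tilde{h}_{Q,j'}t^{\lceil j+j'\rceil}$ with $\sum_{j,j'} \tilde{h}_{P,j}\tilde{h}_{Q,j'}t^{\lceil j\rceil+\lceil j'\rceil}$, and both invoke Remark~\ref{r:exponents} to produce exponents $j,j'$ with fractional parts in $(0,\tfrac{1}{2}]$ when $r_P,r_Q>1$. The paper's concluding step is shorter than your minimality argument: since every pair satisfies $\lceil j+j'\rceil \le \lceil j\rceil+\lceil j'\rceil$ and all coefficients $\tilde{h}_{P,j}\tilde{h}_{Q,j'}$ are non-negative, a single strict inequality already forces the two polynomials to differ (e.g.\ their derivatives at $t=1$ disagree), so your $A/B$ decomposition and minimal-$n_0$ trick, while correct, are not needed.
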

\begin{proof}
If we write $\tilde{h}(P;t) = \sum_{j \in \Q} \tilde{h}_{P,j} t^j$, then by \eqref{e:roundup} and \eqref{e:compute},
\[
h^*(P \oplus Q;t) = \sum_{j,j' \in \Q} \tilde{h}_{P,j} \tilde{h}_{Q,j'} t^{\lceil j + j' \rceil},
\]
\[
h^*(P;t)h^*(Q;t) = \sum_{j,j' \in \Q} \tilde{h}_{P,j} \tilde{h}_{Q,j'} t^{\lceil j \rceil + \lceil j' \rceil},
\]
If $r_P = 1$ or $r_Q = 1$, then we have equality. If $r_P,r_Q > 1$, then by Remark~\ref{r:exponents}, there exists $(j,j') \in \Q^2$ such that
$\tilde{h}_{P,j},\tilde{h}_{Q,j'} > 0$ and $0 < j - \lfloor j \rfloor, j' - \lfloor j' \rfloor \le 1/2$. Then $\lceil j \rceil + \lceil j' \rceil = \lceil j + j' \rceil + 1$, and the non-negativity of the coefficients of $\tilde{h}(P;t)$ and $\tilde{h}(Q;t)$ implies that $h^*(P \oplus Q;t) \ne h^*(P;t)h^*(Q;t)$.
\end{proof}

\begin{remark}
A lattice polytope $P$ satisfying $r_P = 1$ and containing the origin in its relative interior is called \define{reflexive}.  These polytopes have received a lot of attention as, geometrically, they correspond to Fano toric varieties.  In particular, they play a central  
role in Batyrev and Borisov's construction of mirror pairs of Calabi-Yau varieties \cite{BBMirror}.
\end{remark}

\begin{remark}
The weighted $h^*$-polynomial arises naturally in two distinct geometric situations: Firstly, in the computation of dimensions of the graded pieces of orbifold cohomology groups of toric stacks \cite[Theorem~4.3]{Weighted} and, more generally, in the computation of motivic integrals on
toric stacks \cite[Theorem~6.5]{Motivic}. Secondly, in computations of the action of monodromy on the cohomology of the fiber of a degeneration of complex hypersurfaces (or the associated Milnor fiber) \cite[Sections 5,6, Corollary~5.12]{Monodromy}. In particular, taking the free sum of polytopes above corresponds to taking products of the associated toric stacks, and the multiplicative formula in Lemma~\ref{l:main} may be viewed as a K\"unneth formula for the dimensions of the graded pieces of orbifold cohomology groups of toric stacks.
\end{remark}

\begin{example}
In order to provide a wider class of examples of weighted $h^*$-polynomials, we consider a class of examples of lattice polytopes used by Payne in  \cite{PayEhrhart}.
Consider positive integers $\alpha_0 \ge \alpha_{1} \ge \cdots \ge \alpha_d$ with no common factor and let $N = \Z^{d + 1}/(\sum_{i = 0}^{d} \alpha_{i} e_{i} = 0)$, where $e_0, \ldots, e_d$ denotes the standard basis of $\Z^{d + 1}$.  Observe that $N$ is a  lattice  of
rank $d$ and, if
$P(\alpha_{0}, \ldots, \alpha_{d})$ denotes the convex hull of the images of  $e_{0}, \ldots, e_{d}$, then $P(\alpha_{0}, \ldots, \alpha_{d})$ is a lattice polytope containing the origin in its relative interior.
The following formula follows from the proof of \cite[Lemma~9.1]{Additive}:
\[
\tilde{h}(P(\alpha_{0}, \ldots, \alpha_{d});t) =
\sum_{i = 0}^{d} \sum_{j = 0}^{\alpha_i - 1}
t^{       \sum_{0 \le k \le d, k \ne i}     (\frac{j \alpha_k}{\alpha_i} - \lfloor \frac{j \alpha_k}{\alpha_i} \rfloor    )   +  \sum_{k = i + 1}^d \phi( \frac{j \alpha_k}{\alpha_i})     },
\]
where $\phi(x) = 1$ if $x$ is an integer and $\phi(x) = 0$, otherwise. 
\end{example}

We now consider a partial converse to \eqref{e:compute}.
We will use the following lemma due to Terence Harris \cite{THSums}.

\begin{lemma}
Let $f(t) \in \R[t^{1/r}]$ be a polynomial with non-negative coefficients and fractional exponents for some $r \in \Z_{> 0}$. Fix a positive real number $x$. For any $n \in \Z_{> 0}$, let $f^*_n(t) := \Psi(f(t)^n) \in \R[t]$, where $\Psi$ is defined in \eqref{e:Psi} i.e. $f^*_n(t)$ is obtained from  $f(t)^n$ by rounding up exponents in $t$. Then
\[
f^*_n(x)  \le f(x)^n \le x^{\frac{1}{r} - 1} f^*_n(x)  \textrm{ if } 0 < x \le 1,
\]
\[
f(x)^n \le f^*_n(x) \le x^{1 - \frac{1}{r}} f(x)^n \textrm{ if } x \ge 1.
\]
In particular, given any polynomial $f(t) \in \bigcup_{r \in \Z_{> 0}} \R[t^{1/r}]$ with non-negative coefficients and fractional exponents,
\[
f(x) = \lim_{n \rightarrow \infty} f^*_n(x)^{1/n},
\]
and $f(t)$ determines and is determined by $\{ f^*_n(t) \mid n \in \Z_{> 0} \}$.
\end{lemma}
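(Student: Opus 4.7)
The plan is to prove the two displayed inequalities termwise, using only the combinatorial fact that the exponents of $f(t)^n$ lie in $\tfrac{1}{r}\Z_{\ge 0}$, and then extract the limit formula and the determinacy statement as formal consequences.

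First I would expand $f(t)^n = \sum_{k} b_k t^k$ with $b_k \ge 0$ and $k$ ranging over $\tfrac{1}{r}\Z_{\ge 0}$, so that $f^*_n(t) = \sum_k b_k t^{\lceil k \rceil}$. The key input is the uniform bound $0 \le \lceil k \rceil - k \le 1 - 1/r$ for every such $k$: writing $k = (qr+s)/r$ with $0 \le s < r$, one has $\lceil k \rceil - k$ equal to $0$ if $s=0$ and $1 - s/r$ if $s > 0$, in either case at most $1 - 1/r$. The crucial point is that raising to the $n$-th power does not enlarge the exponent denominator, so this bound is valid for all $n$.

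Next I would use this termwise. For $0 < x \le 1$ the function $x^a$ is decreasing in $a$, so $\lceil k \rceil \ge k$ gives $x^{\lceil k \rceil} \le x^k$ termwise and, summing against the non-negative coefficients $b_k$, yields $f^*_n(x) \le f(x)^n$. In the reverse direction $\lceil k \rceil \le k + 1 - 1/r$ combined with $x \le 1$ gives $x^k \le x^{1/r - 1}\, x^{\lceil k \rceil}$, hence $f(x)^n \le x^{1/r - 1} f^*_n(x)$. The case $x \ge 1$ is entirely parallel with both inequalities reversed and the prefactor becoming $x^{1 - 1/r}$, which settles the two displayed inequalities.

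For the limit, I raise both sides to the power $1/n$; since the multiplicative error $x^{\pm(1/r - 1)}$ is independent of $n$, its $n$-th root tends to $1$, so the squeeze yields $f^*_n(x)^{1/n} \to f(x)$ (the corner case $f(x)=0$, which by non-negativity of coefficients and positivity of $x$ forces $f \equiv 0$, is trivial). For the last assertion the map $f \mapsto \{f^*_n\}$ is manifest; conversely, from the sequence one evaluates $f^*_n(x)$ at arbitrary $x > 0$, recovers $f(x)$ via the limit formula, and then determines the polynomial $f(t) \in \R[t^{1/r}]$ from its values on $(0,\infty)$. I do not foresee a real obstacle here; the only point requiring care is the uniformity in $n$ of the exponent bound $\lceil k \rceil - k \le 1 - 1/r$, which is exactly what makes the multiplicative error harmless after extracting $n$-th roots.
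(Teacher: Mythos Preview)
Your proposal is correct and follows essentially the same line as the paper's proof: both argue termwise via $x^{i/r} \le x^{\lceil i/r \rceil} \le x^{1 - 1/r} x^{i/r}$ for $x \ge 1$ (and the reversed version for $0 < x \le 1$), sum against the non-negative coefficients, and then observe that passing from $f$ to $f^n$ changes nothing because the exponents remain in $\tfrac{1}{r}\Z_{\ge 0}$. Your write-up is in fact more explicit than the paper's on the uniformity in $n$, the squeeze argument for the limit, and the recovery of $f(t)$ from its values on $(0,\infty)$.
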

\begin{proof}
First assume that $x \ge 1$. When $n = 1$, the inequalities  $\Psi(f(t))_{t = x}  \le f(x) \le x^{1 - \frac{1}{r}} \Psi(f(t))_{t = x}$ follow from
the fact that $x^{\frac{i}{r}} \le x^{\lceil \frac{i}{r} \rceil } \le x^{1 - \frac{1}{r}}x^{\frac{i}{r}}$ for any $i \in \Z_{\ge 0}$, and the assumption that the coefficients of $f(t)$ are non-negative. When $n \ge 1$, the inequalities follow by replacing $f(t)$ with $f(t)^n$. When $0 < x \le 1$, $x^{\lceil \frac{i}{r} \rceil } \le x^{\frac{i}{r}}  \le x^{\frac{1}{r} - 1}x^{\lceil \frac{i}{r} \rceil }$ and the result follows similarly. The final statement follows immediately.
\end{proof}

For any positive integer $n$, let $P^{\oplus n}$ denote the free sum of $P$ with itself $n$ times.
By Lemma~\ref{l:main} and \eqref{e:roundup}, one may apply the above lemma with $f(t) = \tilde{h}(P;t)$, $f(t)^n = \tilde{h}(P^{\oplus n};t)$, $f_n^*(t) = h^*(P^{\oplus n};t)$ and $r = r_P$, to obtain the corollary below.

\begin{corollary}\label{c:asymptotics}
Let $P$ be a full-dimensional lattice polytope containing the origin with respect to a lattice $N_P$.
Fix a positive real number $x$. For any $n \in \Z_{> 0}$, and with $r_P$ as defined in \eqref{e:Gorenstein},
\[
h^*(P^{\oplus n};x)  \le \tilde{h}(P;x)^n \le x^{\frac{1}{r_P} - 1} h^*(P^{\oplus n};x)  \textrm{ if } 0 < x \le 1,
\]
\[
\tilde{h}(P;x)^n \le h^*(P^{\oplus n};x) \le x^{1 - \frac{1}{r_P}} \tilde{h}(P;x)^n \textrm{ if } x \ge 1.
\]
In particular,
\[
\tilde{h}(P;x) = \lim_{n \rightarrow \infty} h^*(P^{\oplus n};x)^{1/n},
\]
and $\tilde{h}(P;t)$ determines and is determined by $\{ h^*(P^{\oplus n};t) \mid n \in \Z_{> 0} \}$.
\end{corollary}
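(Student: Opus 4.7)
The plan is to derive the corollary as a direct application of the preceding Harris lemma, after identifying its generic objects with polytopal ones coming from $P$. I will take $f(t) = \tilde{h}(P;t)$ with $r = r_P$; by Remark~\ref{r:rp}, $f(t)$ indeed lies in $\R[t^{1/r_P}]$, and its coefficients are non-negative by the definition in \eqref{e:altdef}.

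The substantive identification is that the polynomial $f_n^*(t) = \Psi(f(t)^n)$ from Harris's lemma equals $h^*(P^{\oplus n};t)$. This follows from two earlier ingredients: iterating Lemma~\ref{l:main} gives
\[
\tilde{h}(P^{\oplus n};t) = \tilde{h}(P;t)^n = f(t)^n,
\]
and applying \eqref{e:roundup} to $P^{\oplus n}$ gives
\[
h^*(P^{\oplus n};t) = \Psi(\tilde{h}(P^{\oplus n};t)) = \Psi(f(t)^n) = f_n^*(t).
\]

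With these identifications in place, the two two-sided inequalities in the corollary are just the two two-sided inequalities of Harris's lemma rewritten in the current notation, and the limiting formula $\tilde{h}(P;x) = \lim_n h^*(P^{\oplus n};x)^{1/n}$ together with the statement that $\tilde{h}(P;t)$ is determined by $\{h^*(P^{\oplus n};t)\}_n$ follows from the corresponding final conclusions of that lemma. There is no real obstacle here: the analytic content sits in Harris's lemma, and the algebraic content (that raising $\tilde{h}(P;t)$ to the $n$th power corresponds to $n$-fold free sum) sits in Lemma~\ref{l:main}. The only point that needs a moment's thought is confirming that $r_P$ is an admissible choice of $r$ in Harris's lemma applied to $\tilde{h}(P;t)$, which is exactly the content of Remark~\ref{r:rp}; note in particular that one does \emph{not} need to track how $r_{P^{\oplus n}}$ varies with $n$, since Harris's lemma is applied once to $f$ and then specialised to each $n$.
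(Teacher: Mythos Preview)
Your proposal is correct and matches the paper's own argument essentially line for line: the paper likewise sets $f(t)=\tilde{h}(P;t)$, $r=r_P$, uses Lemma~\ref{l:main} to get $f(t)^n=\tilde{h}(P^{\oplus n};t)$ and \eqref{e:roundup} to get $f_n^*(t)=h^*(P^{\oplus n};t)$, and then reads off the conclusions from Harris's lemma. Your extra remarks justifying $r=r_P$ via Remark~\ref{r:rp} and noting that one need not track $r_{P^{\oplus n}}$ are accurate and only make the deduction slightly more explicit than the paper's one-sentence proof.
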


Note that the final statement above states that the following two sets contain precisely the same information:
\begin{equation}\label{e:compare} \{ \#(\lambda P \cap N_P)  \mid \lambda \in \Q_{\ge 0} \}, \end{equation}
\[
\{ \#(m P^{\oplus n} \cap (N_P \oplus \cdots \oplus N_P))  \mid m \in \Z_{\ge 0}, n \in \Z_{> 0} \}.
\]

\begin{remark}\label{r:subsequence}
From the proof of Corollary~\ref{c:asymptotics},
$\tilde{h}(P;t)$ determines and is determined by $\{ h^*(P^{\oplus n};t) \mid n \in S \}$ for any infinite subset $S \subseteq \Z_{> 0}$.
\end{remark}

Finally, the above results together with the central limit theorem describe some of the asymptotic behavior of $h^*(P^{\oplus n};t)$ as $n \rightarrow \infty$.
More precisely, let $X^*_P$, $\tilde{X}_P$ be $\R$-valued random variables with probability distributions on $\R$ defined by:
\[
\P( X^*_P = i ) = \frac{h^*_{P,i}}{\Vol(P)},
\]
\[
\P( \tilde{X}_P = j) = \frac{\tilde{h}_{P,j}}{\Vol(P)},
\]
where $h^*(P;t)  =  \sum_{i \in \Z} h^*_{P,i} t^i$ and $\tilde{h}(P;t)  =  \sum_{j \in \Q} \tilde{h}_{P,j} t^j$, and $h^*(P;1) = \tilde{h}(P;1) = \Vol(P)$ (see Remark~\ref{r:normvol}). Equivalently, the
moment generating functions of $X^*_P$ and $\tilde{X}_P$ are given by:
\[
\E[e^{sX^*_P}] = \frac{1}{\Vol(P)} h^*(P;e^s),\]\[\E[e^{s\tilde{X}_P}] = \frac{1}{\Vol(P)} \tilde{h}(P;e^s).
\]
Let $\tilde{\mu}_P$ and $\tilde{\sigma}_P$ denote the mean and standard deviation of $\tilde{X}_P$ respectively, and let $\mathcal{N}(\mu,\sigma)$ denote the normal distribution
with mean $\mu$ and variance $\sigma$.

\begin{example}\label{e:meanhalfdim}
When the
origin lies in the relative interior of $P$, \eqref{e:symmetry} implies that $\tilde{\mu}_P  = \frac{\dim P}{2}$.
\end{example}

\begin{corollary}\label{c:clt}
Let $P$ be a full-dimensional lattice polytope containing the origin in  a lattice $N_P$. 
Then
\[
\frac{ X^*_{P^{\oplus n}} - n\tilde{\mu}_P}{\sqrt{n}}  \overset{d}{\rightarrow} \mathcal{N}(0,\tilde{\sigma}_P),
\]
as $n \rightarrow \infty$, where convergence means convergence in distribution (see Remark~\ref{r:convergence}). 
\end{corollary}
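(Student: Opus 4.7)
The plan is to reduce the statement to the classical central limit theorem by first analyzing $\tilde{X}_{P^{\oplus n}}$ and then showing that $X^*_{P^{\oplus n}}$ and $\tilde{X}_{P^{\oplus n}}$ differ by a bounded (hence $o(\sqrt{n})$) amount.

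First I would invoke Lemma~\ref{l:main} to observe that $\tilde{h}(P^{\oplus n};t) = \tilde{h}(P;t)^n$, and evaluating at $t = 1$ gives $\Vol(P^{\oplus n}) = \Vol(P)^n$. Substituting into the moment generating function of $\tilde{X}_{P^{\oplus n}}$ then yields
\[
\E[e^{s\tilde{X}_{P^{\oplus n}}}] = \frac{\tilde{h}(P^{\oplus n};e^s)}{\Vol(P^{\oplus n})} = \left(\frac{\tilde{h}(P;e^s)}{\Vol(P)}\right)^n = \E[e^{s\tilde{X}_P}]^n,
\]
so $\tilde{X}_{P^{\oplus n}}$ has the same distribution as $Y_1 + \cdots + Y_n$ where $Y_1,\ldots,Y_n$ are i.i.d.\ copies of $\tilde{X}_P$. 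The classical central limit theorem, applied to these bounded (hence finite-variance) i.i.d.\ random variables, immediately gives
\[
\frac{\tilde{X}_{P^{\oplus n}} - n\tilde{\mu}_P}{\sqrt{n}} \overset{d}{\longrightarrow} \mathcal{N}(0,\tilde{\sigma}_P).
\]

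Next I would connect $X^*_{P^{\oplus n}}$ to $\tilde{X}_{P^{\oplus n}}$. By \eqref{e:roundup}, the $h^*$-polynomial is obtained from $\tilde{h}$ by replacing each $t^j$ with $t^{\lceil j \rceil}$, so the push-forward of the distribution of $\tilde{X}_{P^{\oplus n}}$ under the ceiling map is precisely the distribution of $X^*_{P^{\oplus n}}$. Concretely, after coupling, we may assume $X^*_{P^{\oplus n}} = \lceil \tilde{X}_{P^{\oplus n}} \rceil$, so $0 \le X^*_{P^{\oplus n}} - \tilde{X}_{P^{\oplus n}} \le 1$ almost surely.

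Finally, this uniformly bounded discrepancy, divided by $\sqrt{n}$, converges to $0$ in probability. By Slutsky's theorem,
\[
\frac{X^*_{P^{\oplus n}} - n\tilde{\mu}_P}{\sqrt{n}} = \frac{\tilde{X}_{P^{\oplus n}} - n\tilde{\mu}_P}{\sqrt{n}} + \frac{X^*_{P^{\oplus n}} - \tilde{X}_{P^{\oplus n}}}{\sqrt{n}} \overset{d}{\longrightarrow} \mathcal{N}(0,\tilde{\sigma}_P),
\]
as desired. There is no serious obstacle here; the only subtlety worth highlighting is the coupling in the previous step, which is what makes the $h^*$ and weighted $h^*$ distributions share the same Gaussian limit despite having different support (integer versus rational).
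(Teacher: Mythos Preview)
Your proof is correct and in fact a bit cleaner than the paper's. Both arguments start the same way: identify $\tilde X_{P^{\oplus n}}$ (via Lemma~\ref{l:main}) with a sum of $n$ i.i.d.\ copies of $\tilde X_P$ and apply the classical CLT. The difference is in the transfer from $\tilde X_{P^{\oplus n}}$ to $X^*_{P^{\oplus n}}$. The paper works at the level of moment generating functions: it invokes the sandwich inequalities of Corollary~\ref{c:asymptotics} (coming from the Harris lemma) with $x=e^{s/\sqrt n}$ to squeeze $\E[e^{sZ_n^*}]$ between $\E[e^{s\tilde Z_n}]$ and a vanishing multiple of it, and then appeals to the theorem that pointwise convergence of MGFs implies convergence in distribution. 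You instead observe directly from \eqref{e:roundup} that $X^*_{P^{\oplus n}}$ is distributed as $\lceil\tilde X_{P^{\oplus n}}\rceil$, couple, and finish with Slutsky. Your route avoids both Corollary~\ref{c:asymptotics} and the MGF-convergence theorem, at the cost of not tying the result back to the paper's earlier machinery; the paper's route has the virtue of reusing the estimates it has already established.
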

\begin{proof}
Fix $s \in \R$. Then by Corollary~\ref{c:asymptotics}, for any $n \in \Z_{> 0}$,
\[
e^{-s \tilde{\mu}_P \sqrt{n}} h^*(P^{\oplus n};e^{\frac{s}{\sqrt{n}}})  \le e^{-s \tilde{\mu}_P \sqrt{n}} \tilde{h}(P;e^{\frac{s}{\sqrt{n}}})^n \le  e^{\frac{s(1 - r_P)}{\sqrt{n}r_P}} e^{-s \tilde{\mu}_P \sqrt{n}} h^*(P^{\oplus n};e^{\frac{s}{\sqrt{n}}})  \textrm{ if } s \le 0,
\]
\[
e^{-s \tilde{\mu}_P \sqrt{n}} \tilde{h}(P;e^{\frac{s}{\sqrt{n}}})^n \le e^{-s \tilde{\mu}_P \sqrt{n}} h^*(P^{\oplus n};e^{\frac{s}{\sqrt{n}}}) \le  e^{\frac{s(r_P - 1)}{\sqrt{n}r_P}} e^{-s \tilde{\mu}_P \sqrt{n}} \tilde{h}(P;e^{\frac{s}{\sqrt{n}}})^n  \textrm{ if } s \ge 0.
\]
If $\tilde{X}_1,\ldots,\tilde{X}_n$ are iid random variables with distribution $\tilde{X}_P$, and $\tilde{Z}_n := \frac{(\tilde{X}_1 - \tilde{\mu}_P) + \cdots + (\tilde{X}_n - \tilde{\mu}_P)}{\sqrt{n}}$, then either a direct computation or invoking the central limit theorem gives:
\[
\lim_{n \rightarrow \infty} \E[e^{s\tilde{Z}_n}] = \lim_{n \rightarrow \infty} e^{-s \tilde{\mu}_P \sqrt{n}} \tilde{h}(P;e^{\frac{s}{\sqrt{n}}})^n = e^{\frac{(s \tilde{\sigma}_P)^2}{2}},
\]
where $e^{\frac{(s \tilde{\sigma}_P)^2}{2}}$ is the moment generating function of $\mathcal{N}(0,\tilde{\sigma}_P)$.
If $Z_n^* := \frac{X^*_{P^{\oplus n}} - n\tilde{\mu}_P}{\sqrt{n}}$, then the above inequalities state that
\[
\E[e^{sZ_n^*}]  \le \E[e^{s\tilde{Z}_n}] \le  e^{\frac{s(1 - r_P)}{\sqrt{n}r_P}} \E[e^{sZ_n^*}]  \textrm{ if } s \le 0,
\]
\[
\E[e^{s\tilde{Z}_n}] \le \E[e^{sZ_n^*}] \le  e^{\frac{s(r_P - 1)}{\sqrt{n}r_P}} \E[e^{s\tilde{Z}_n}] \textrm{ if } s \ge 0.
\]
Hence $\lim_{n \rightarrow \infty} \E[e^{sZ_n^*}] = \lim_{n \rightarrow \infty} \E[e^{s\tilde{Z}_n}] = e^{\frac{(s \tilde{\sigma}_P)^2}{2}}$ and the result follows since convergence of the moment generating functions of $Z_n^*$ to the moment generating function of $\mathcal{N}(0,\tilde{\sigma}_P)$  implies convergence of the corresponding distributions \cite[Theorem 3]{CurNote} (note that all moment generating functions above converge for all $s \in \R$).
\end{proof}

\begin{remark}\label{r:convergence}
The convergence in Corollary~\ref{c:clt} is defined in terms of the corresponding cumulative distribution functions as follows: for all $x \in \R$, if we write
 $h^*(P^{\oplus n};t) = \sum_{i \in \Z}  h^*_{P^{\oplus n},_i}t^i$,
\[
F_n(x) = \P\big(\frac{ X^*_{P^{\oplus n}} - n\tilde{\mu}_P}{\sqrt{n}} \le x\big) = \frac{1}{\Vol(P)^n}\sum_{ \substack{i \in \Z \\ i \le \sqrt{n} x + n\tilde{\mu}_P}} h^*_{P^{\oplus n},_i},
\]
and $\Phi_{\tilde{\sigma}_P}(x) = \frac{1}{\sqrt{2\pi}\tilde{\sigma}_P} \int_{- \infty}^{x} e^{-(\frac{s}{\tilde{\sigma}_P })^2/2} ds$, then $\underset{n \rightarrow \infty}{\lim} F_n(x)  = \Phi_{\tilde{\sigma}_P}(x)$.
\end{remark}

\begin{example}
A lattice polytope $P$ containing the origin is a \define{standard simplex} if its non-zero vertices form a basis of $N_P$.
In this case, $\tilde{h}(P;t) = h^*(P;t) = 1$, $\tilde{\mu}_P = \tilde{\sigma}_P = 0$ and $P^{\oplus n}$ is a standard simplex for all $n$.
\end{example}

\begin{example}
Fix a positive integer $n$ and consider the lattice $N = \Z[e^{\frac{2\pi \sqrt{-1}}{n}}]$. The \define{n-th cyclotomic polytope} $\mathcal{C}_n$ is the convex hull
of all $n$-th roots of unity in $N \otimes_\Z \R \cong \R^{\phi(n)}$, where $\phi$ is the Euler totient function. In \cite[Theorem~7,Lemma~8,Corollary~9]{BSCyclotomic}, Beck and
Ho{\c{s}}ten prove that the lattice points of $\mathcal{C}_n$ consist of the $n$-th roots of unity, which are vertices, together with the origin, which is the unique interior lattice point,
and they identify $\mathcal{C}_n$ with $\mathcal{C}_{\sqf(n)}^{\oplus \frac{n}{\sqf(n)}}$, where $\sqf(n)$ denotes the square-free part of $n$ i.e. the product of the prime divisors of $n$. Moreover, they prove that $\mathcal{C}_n$ is reflexive if $n$ is divisible by at most two odd primes, and they show how to compute $h^*(\mathcal{C}_n;t) = \tilde{h}(\mathcal{C}_n;t)$ for $n \le 104$. The smallest value of $n$ for which $h^*(\mathcal{C}_n;t)$ is unknown is $n = 105 = 3 \cdot 5 \cdot 7$. We refer the reader to \cite{BSCyclotomic} for further results and details.

By \eqref{e:compute}, and using Beck and
Ho{\c{s}}ten's result above, for any positive integer $n$,
\[
h^*(\mathcal{C}_n;t) =     h^*(\mathcal{C}_{\sqf(n)}^{\oplus \frac{n}{\sqf(n)}};t) =  \Psi(\tilde{h}(\mathcal{C}_{\sqf(n)};t)^{\frac{n}{\sqf(n)}}).
\]
It follows from this observation and Remark~\ref{r:subsequence} that for any product of distinct primes $a$, the problem of computing
 $\{ h^*(\mathcal{C}_n;t) \mid \sqf(n) = a \}$ is equivalent to the problem of computing $\tilde{h}(\mathcal{C}_a;t)$.
More precisely, consider a strictly increasing sequence of positive integers $\{ n_k \}_{k \in \Z_{> 0}}$ 
satisfying $\sqf(n_k) = a$ for all $k$. Then by Remark~\ref{r:subsequence}, 
$\{ h^*(\mathcal{C}_{n_k};t) = h^*(\mathcal{C}_{a}^{\oplus \frac{n_k}{a}};t) \mid k \in \Z_{> 0} \}$ determines and is determined by $\tilde{h}(\mathcal{C}_{a};t)$. Moreover, since
$\tilde{\mu}_{\mathcal{C}_a} = \frac{\dim \mathcal{C}_a}{2} = \frac{\phi(a)}{2}$ by Example~\ref{e:meanhalfdim}, setting $P = \mathcal{C}_{a}$ and $n = \frac{n_k}{a}$ in Corollary~\ref{c:clt}  implies that
\[
\frac{ X^*_{\mathcal{C}_{n_k}} - \frac{n_k \phi(a)}{2a}}{\sqrt{\frac{n_k}{a}}}  \overset{d}{\rightarrow} \mathcal{N}(0,\tilde{\sigma}_{\mathcal{C}_a}),
\]
as $k \rightarrow \infty$. We note that it is an open problem to compute $\tilde{\sigma}_{\mathcal{C}_a}$ for any product of distinct primes $a$.

\end{example}

\bibliographystyle{amsplain}
\def\cprime{$'$}
\providecommand{\bysame}{\leavevmode\hbox to3em{\hrulefill}\thinspace}
\providecommand{\MR}{\relax\ifhmode\unskip\space\fi MR }
\providecommand{\MRhref}[2]{%
  \href{http://www.ams.org/mathscinet-getitem?mr=#1}{#2}
}
\providecommand{\href}[2]{#2}

\end{document}